\documentclass{amsart}
\usepackage[utf8]{inputenc}
\usepackage[english]{babel}
\usepackage{amsmath}
\usepackage{amsfonts}
\usepackage{amssymb}
\usepackage{amsthm}
\usepackage{hyperref}
\usepackage{mathtools}
\usepackage{longtable}
\usepackage{graphicx}
\usepackage{bbm}
\usepackage[backrefs,lite]{amsrefs}

\DeclareMathOperator{\im}{im}
\DeclareMathOperator{\Cl}{Cl}
\DeclareMathOperator{\Pic}{Pic}
\DeclareMathOperator{\lcm}{lcm}

\newcommand{\Z}{\mathbb{Z}}
\newcommand{\Q}{\mathbb{Q}}
\newcommand{\K}{\mathbb{K}}

\newtheorem{lem}{Lemma}[section]
\newtheorem{proposition}[lem]{Proposition}
\newtheorem{corollary}[lem]{Corollary}
\newtheorem{theorem}[lem]{Theorem}

\theoremstyle{definition}
\newtheorem{definition}[lem]{Definition}
\newtheorem{procedure}[lem]{Procedure}
\newtheorem{algorithm}[lem]{Algorithm}
\newtheorem{remark}[lem]{Remark}

\title[Calabi-Yau complete intersections in fwps]{Calabi-Yau complete intersections in \\ fake weighted projective spaces}
\author{Marco Ghirlanda}
\email{marco.ghirlanda@uni-tuebingen.de}
\address{Mathematisches Institut, Auf d. Morgenstelle 10, 72076 T\"ubingen}
\date{}
\begin{document}

\begin{abstract}
We present a classification algorithm for Calabi-Yau complete intersections arising from nef-partitions in fake weighted projective spaces, allowing us to determine all such complete intersections up to dimension five. Furthermore, we compute the Hodge pairs of the $3$-dimensional families obtained, and find twenty new Hodge pairs not realized by any toric Calabi-Yau hypersurface. Finally, we provide an explicit characterization for the families of maximal codimension.
\end{abstract}

\maketitle

\section{Introduction}

The study of Calabi-Yau hypersurfaces in the toric setting originates in Batyrev's work \cite{B}. With each reflexive polytope one can naturally associate a Gorenstein toric Fano variety hosting a family of Calabi-Yau hypersurfaces. Batyrev's construction identifies dual reflexive polytopes with mirror pairs of Calabi-Yau hypersurfaces. This viewpoint led to large-scale classifications of reflexive polytopes: Kreuzer and Skarke provided the complete list of reflexive polytopes in dimensions three \cite{KS3} and four \cite{KS4}. More recently, we developed in \cite{Ghi} a classification algorithm for reflexive simplices and explicitly determined them up to dimension six. The resulting dataset is available on Zenodo \cite{zenodo:17296449}.

A natural next step is to pass from hypersurfaces to complete intersections. In the toric framework,
Borisov introduced the notion of \emph{nef-partition}, namely a decomposition of the anticanonical class
into nef Cartier summands \cite{Bor}. Such a decomposition determines line bundles on the ambient toric
Fano variety. Intersecting general sections of each line bundle produces a family of Calabi-Yau complete intersections.

In this note we provide a classification algorithm for complete intersections arising from nef-partitions
in \emph{fake weighted projective spaces} (fwps), i.e.\ $\Q$-factorial toric Fano varieties of Picard number one.
Such varieties admit an explicit combinatorial description in terms of a \emph{degree matrix}, see
Section~\ref{sec:preliminaries}. In these terms, a nef-partition is encoded by a partition of the columns
into blocks, with the property that the column-sum of each block represents a Cartier divisor class.
We obtain in particular the following classification, available on Zenodo \cite{zenodo:CYCI-fwps}.

\begin{theorem}\label{big theorem}
The numbers of Calabi-Yau complete intersections of dimension up to five arising from nef-partitions in fake weighted projective spaces,
listed by their dimension $d$ and codimension $s$, are as follows:
\newpage
\begin{center}
\renewcommand{\arraystretch}{1.3}
\begin{longtable}{c|rrrrr}
{$s\backslash d$} &
\multicolumn{1}{c}{1} &
\multicolumn{1}{c}{2} &
\multicolumn{1}{c}{3} &
\multicolumn{1}{c}{4} &
\multicolumn{1}{c}{5} \\
\hline
\endfirsthead

{$s\backslash d$} &
\multicolumn{1}{c}{1} &
\multicolumn{1}{c}{2} &
\multicolumn{1}{c}{3} &
\multicolumn{1}{c}{4} &
\multicolumn{1}{c}{5} \\
\hline
\endhead

1 & 5 & 48 & 1.561 & 220.794 & 309.019.970 \\
2 & 2 & 10 & 164 & 6.045 & 1.042.424 \\
3 & - & 3 & 21 & 425 & 20.647 \\
4 & - & - & 6 & 43 & 1.134 \\
5 & - & - & - & 9 & 95 \\
6 & - & - & - & - & 18 \\
\end{longtable}
\end{center}
\end{theorem}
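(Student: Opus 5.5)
This theorem is a computer classification, so the proof consists of a finite, provably complete enumeration followed by an isomorphism reduction. The enumeration will be based on the data from \cite{Ghi}.

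\textbf{Step 1: Reduce to the Gorenstein case.} A fake weighted projective space $Z$ of dimension $n=d+s$ carrying a nef-partition has a Cartier anticanonical class, because that class is a sum of Cartier classes. A Fano toric variety with Cartier anticanonical class is Gorenstein, so its fan polytope is a reflexive simplex. The ambient spaces are therefore exactly the reflexive simplices of dimension $n\le 6$. These are classified in \cite{Ghi}, are available in \cite{zenodo:17296449}, and are encoded by degree matrices $Q$ as in Section~\ref{sec:preliminaries}.

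\textbf{Step 2: Enumerate nef-partitions on each ambient space.} On the degree-matrix side, a nef-partition of length $s$ is a partition of the $n+1$ columns of $Q$ into $s$ nonempty blocks $I_1,\dots,I_s$. The condition is that each block sum $\sum_{i\in I_j}Q_{\ast i}$ is a Cartier class. Cartier classes in $\Cl(Z)$ form an explicitly computable subgroup. For Picard number one it is the set of classes $w$ such that, for every maximal cone $\sigma_k$ (omit column $k$), $w$ lies in the subgroup generated by the columns other than $k$. Every nonzero effective class on a Picard-number-one space is ample as soon as it is Cartier, so the nef condition is automatic once the class is Cartier. For each $Z$ and each $s\le n$, I will run over all set partitions of the columns into $s$ blocks and test the Cartier condition block by block. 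For $n\le 6$ there are at most $877$ set partitions per ambient space, so this is cheap. Codimension $s\ge 2$ needs $d\ge 1$, which is why the table's lower triangle is empty.

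\textbf{Step 3: Identify isomorphic partitions.} Two nef-partitions give the same family exactly when some automorphism of the degree-matrix data maps one partition to the other. Such an automorphism is a permutation of columns, compatible with an isomorphism of $\Cl(Z)$, that preserves $Q$ up to admissible row operations. I will compute a normal form of $Q$ together with its symmetry group, which is a subgroup of $S_{n+1}$, and count orbits of valid partitions under this group. Tallying the orbits by $(d,s)$ gives the table.

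\textbf{Main obstacle.} The hard part is the scale of the input rather than the theory. In dimension $6$ there are a great many reflexive simplices, and the codimension-one count of about $3\cdot 10^8$ is essentially the number of six-dimensional reflexive simplices. Each one needs a Cartier test and a symmetry computation. I would first speed up the Cartier test by precomputing, for each $Z$, the Cartier subgroup as a lattice in terms of the torsion and free parts of $\Cl(Z)$. I would also check the results against known cases: the $d=1$ counts $5$ and $2$, and the $d=2,3$ counts against known K3 and Calabi--Yau threefold lists.
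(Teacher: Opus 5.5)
Your Step 1 miscounts the ambient dimension, and the plan fails because of it. The ambient fwps has dimension $n=d+s$, and the table runs up to $s=d+1$. For $d=5$ you therefore need ambient dimensions $6$ through $11$; the entry $(d,s)=(5,6)$ lives in $11$-dimensional fwps, cf.\ Proposition~\ref{prop:maximal codimension}. The list of \cite{Ghi} stops at dimension six, so no entry with $d+s\ge 7$ gets any input in your scheme. That includes $(5,2)$, with over a million cases. Classifying all reflexive simplices of dimension $7$ to $11$ first and then testing set partitions is hopeless, since dimension six alone already gives about $3\cdot 10^8$.

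The missing idea is to generate only weight vectors that already carry a nef-partition, block by block, as in Procedure~\ref{proc:weights}. Each block, divided by its gcd $c_i$, is a primitive tuple $W_i$ with $\lcm(W_i)\mid\sum W_i$ of length $|p_i|\le n+3-2s$, because every other block has at least two elements. So for $s\ge 2$ only tuples of length at most $d+1\le 6$ are needed. The multipliers are cut down by $c_iL_i\mid c_jS_j$, $c_iS_i>c_jm_j$ and $c_1\mid\lcm(S_2,\dots,S_s)$. Torsion rows are then adjoined one at a time. This works because Proposition~\ref{prop:nef-part} splits the nef condition into a condition on $w$ alone and conditions each involving $w$ and a single torsion row, so partitions can be discarded as rows are added (Algorithm~\ref{algo:classification}).

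Even where your input is complete, you would count different objects from those in the table. First, you ignore the convention of Remark~\ref{rem:linear-elimination}. Partitions with a block whose class equals some $\omega_i$, in particular singleton blocks, are discarded, because the corresponding equation just eliminates a variable. This convention gives $2s\le n+1$, i.e.\ $s\le d+1$, and that is what empties the entries with $s>d+1$. Your explanation (``$s\ge 2$ needs $d\ge 1$'') does not account for, e.g., $(d,s)=(1,3)$. Your Step~2 would fill that entry with $\{0\}\sqcup\{1\}\sqcup\{2,3,4\}$ on $\mathbb{P}^4$, and would likewise add $\{0\}\sqcup\{1,2,3\}$ on $\mathbb{P}^3$ to the entry $(1,2)$.

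Second, the family depends only on the multidegree, and the paper identifies nef-partitions with the same multidegree (Step~5 of Algorithm~\ref{algo:classification}); your orbit count under column symmetries of $Q$ does not. On $\mathbb{P}(1,1,2,2,3,3)$, the partitions with weight blocks $\{3,3\}\sqcup\{1,1,2,2\}$ and $\{1,2,3\}\sqcup\{1,2,3\}$ lie in different orbits. Both give the complete intersection of two general sextics, so the ``only if'' direction of your Step~3 fails and you would overcount.
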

We then use PALP (see \cites{KS,BKSSW}) to compute the Hodge numbers of the $3$-dimensional families. Recall that in dimension three, thanks to its many symmetries, the Hodge diamond is determined entirely by
the Hodge pair $(h^{1,1},h^{2,1})$. The computations in dimension $4$ are still running.

\begin{corollary}\label{cor:new-hodge}
There are respectively $716$, $121$, $19$, and $6$ Hodge pairs realized by $3$-dimensional Calabi-Yau complete intersections
of codimension $1$, $2$, $3$, and $4$ arising from nef-partitions in fwps:
\begin{center}\label{fig:hodge_pairs}
\centering

\begin{minipage}[t]{0.4\textwidth}
  \centering
  \includegraphics[height=5cm,trim=90 0 100 0,clip]{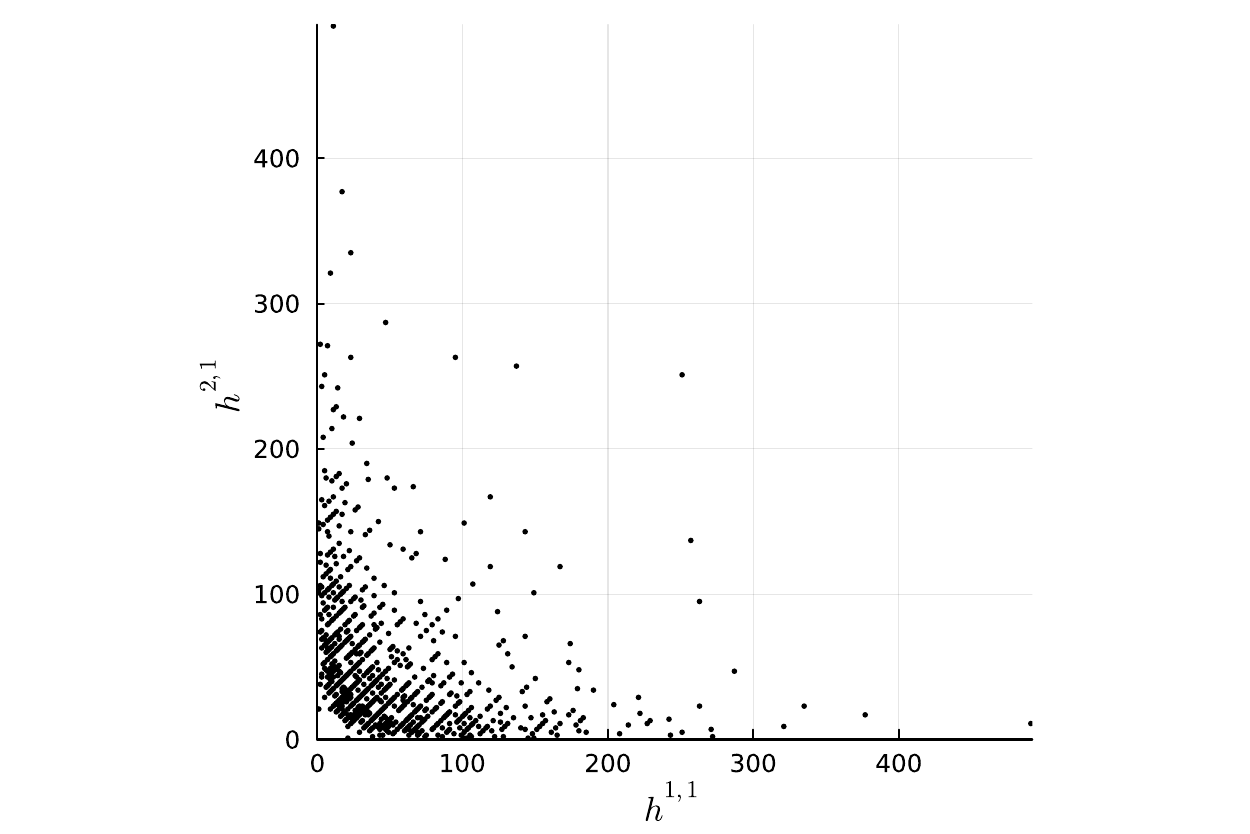}
\end{minipage}\hspace{-0.5cm}
\begin{minipage}[t]{0.4\textwidth}
  \centering
  \includegraphics[height=5cm,trim=190 0 170 0,clip]{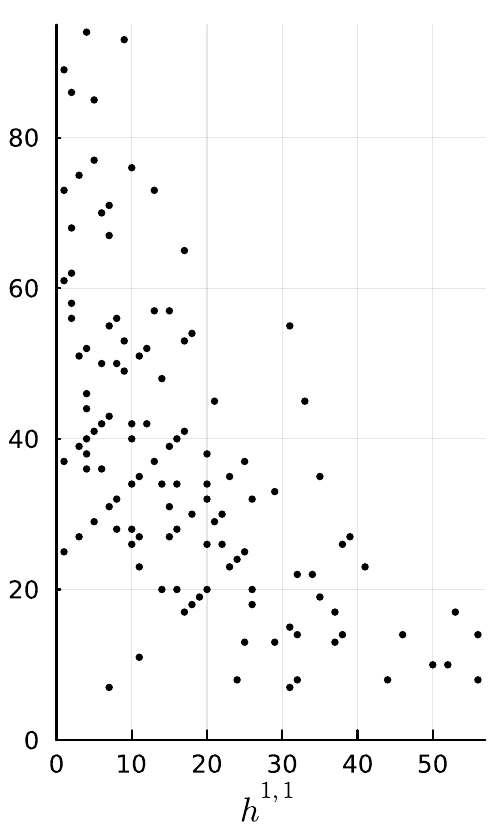}
\end{minipage}\hspace{-0.6cm}
\begin{minipage}[t]{0.13\textwidth}
  \centering
   \includegraphics[height=5cm,trim=260 0 235 0,clip]{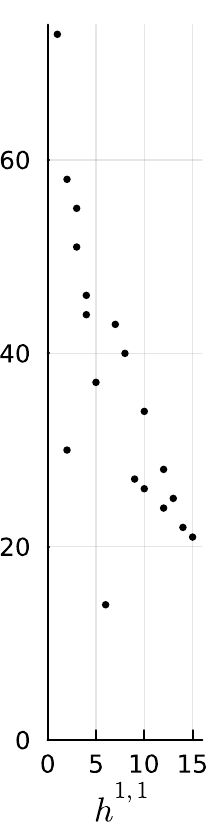}
\end{minipage}
\begin{minipage}[t]{0.13\textwidth}
  \centering
   \includegraphics[height=5cm,trim=280 0 250 0,clip]{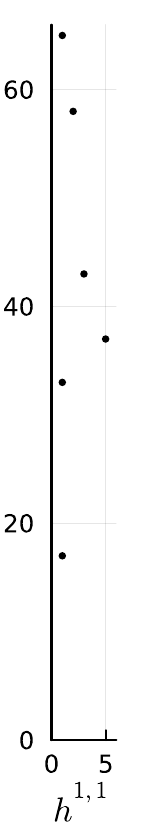}
\end{minipage}
\end{center}
Among the Hodge pairs arising in codimension $\ge 2$, exactly $20$ pairs are new, that means not realized by $3$-dimensional Calabi-Yau hypersurfaces
associated with $4$-dimensional reflexive polytopes (see \cite{KS4}).
For each such pair $(h^{1,1},h^{2,1})$ we indicate in subscript the codimensions in which the pair is realized:
\[
\begin{aligned}
&(1,25)_{2},\ (1,33)_{4},\ (1,37)_{2},\ (1,61)_{2},\ (1,65)_{4},\ (1,73)_{2,3},\ (1,77)_{4},\\[2pt]
&(1,89)_{2},\ (2,30)_{3},\ (2,56)_{2},\ (2,58)_{2,3,4},\ (2,68)_{2},\ (3,27)_{2},\ (3,39)_{2},\\[2pt]
&(3,55)_{3},\ (4,38)_{2},\ (6,14)_{3},\ (7,7)_{2},\ (11,11)_{2}.
\end{aligned}
\]
\end{corollary}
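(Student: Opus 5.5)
The corollary is established by computation: we combine the classification of Theorem~\ref{big theorem} with Hodge number computations in PALP. The plan has four steps.

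First, for each $d=3$ entry of Theorem~\ref{big theorem} we take the stored data. These are the $1.561$, $164$, $21$ and $6$ families in codimensions $1,\dots,4$. Each family is given by a degree matrix of a $4+s$-dimensional fwps together with a partition of its columns into $s$ blocks. From the degree matrix we reconstruct the fan, equivalently the reflexive polytope $\Delta$ whose vertices are the primitive ray generators. From the block partition we reconstruct the nef-partition $\Delta=\Delta_1+\dots+\Delta_s$ in the sense of Borisov, i.e.\ the decomposition of the vertex set $\{v_i\}$ into the blocks $E_1,\dots,E_s$. The input we feed to PALP is the polytope together with this partition of vertices. PALP's \texttt{nef.x} then computes the stringy Hodge numbers of the associated Calabi--Yau complete intersection. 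It does so via the Batyrev--Borisov formula using the Gorenstein cone of index $s$, and in dimension three this yields $(h^{1,1},h^{2,1})$.

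Second, the routine part: we collect the resulting pairs, remove duplicates within each codimension, and count the distinct pairs, which should give $716,121,19,6$. The figures are just plots of these sets. One must check that PALP indeed recognises each block partition as a nef-partition, so that the column sums are Cartier. This holds by construction in Theorem~\ref{big theorem}, but it is also a consistency check on the data. Since different nef-partitions of the same $\Delta$ can be isomorphic, we do not need to quotient by symmetries: duplicates only affect multiplicities, not the set of pairs.

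Third, for the novelty statement, we take the union of the Hodge pairs in codimension $\ge 2$. From it we subtract the set of $30.108$ Hodge pairs realised by the Kreuzer--Skarke list of $4$-dimensional reflexive polytopes \cite{KS4}. What remains should be exactly the twenty listed pairs, and for each we record the codimensions in which it occurs.

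The main obstacle is the correctness and feasibility of the Hodge computation rather than any theoretical step. The fwps are singular, so for $d=3$ the stringy Hodge numbers coincide with those of a crepant (MPCP) resolution of the complete intersection; this justifies calling them Hodge pairs of Calabi--Yau threefolds. The fwps also often carry a nontrivial torsion lattice quotient, so the polytope must be given in the correct lattice $N$ rather than the one generated by the rays; otherwise PALP would compute the invariants of the cover. I would first verify the lattice conversion on small known examples, such as the quintic and the bicubic in $\mathbb{P}^5$, and their quotients, comparing against known Hodge numbers. Only then would I run the full batch.
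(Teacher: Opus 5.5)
Your plan is the same as the paper's, which also establishes the corollary purely by computation: the $d=3$ families of Theorem~\ref{big theorem} are run through PALP to get $(h^{1,1},h^{2,1})$, and the pairs from codimension $\ge 2$ are compared against the Kreuzer--Skarke Hodge pairs of $4$-dimensional reflexive polytopes. Two small remarks: the ambient fwps has dimension $3+s$, not $4+s$ (it is the degree matrix that has $4+s$ columns), and the displayed list contains only nineteen pairs although the text announces twenty, so your final comparison step would have to resolve that discrepancy in the statement itself.
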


Our classification algorithm relies on a new combinatorial formulation of the existence condition for nef-partitions, which separates the constraints coming from the free and torsion parts of the class group, see Proposition \ref{prop:nef-part};
in particular, the conditions on the torsion coordinates can be verified row-wise. This yields a two-stage enumeration:
first list the admissible free weight data together with all nef-partitions, see Procedure \ref{proc:weights}, and then adjoin
torsion rows, retaining only those partitions that remain nef. The resulting procedure, presented in
Algorithm~\ref{algo:classification}, extends the algorithm of~\cite{Ghi} for reflexive simplices
to the nef-partition setting and produces the complete list of Calabi-Yau complete intersections of given dimension and codimension arising from nef-partitions in fwps.

Finally, in Section \ref{sec:maximal codimension} we provide an explicit characterization of the Calabi-Yau complete intersections of maximal codimension, showing that they are in bijection with the orbits of the multisets of points in a vector space over $\Z/2\Z$ affinely spanning the whole space, under the action of the affine general linear group.

\section{Preliminaries}\label{sec:preliminaries}

We first recall from \cite{HHHS}*{Sec. 2} the basic concepts on \emph{fake weighted projective space (fwps)}, that means $\Q$-factorial toric Fano varieties of Picard number one. The fan of a fwps of dimension $n$ is spanned by the faces of an $n$-dimensional Fano simplex~$\Delta$ with vertices $v_0,\dots,v_n\in\Z^{n}$. The fwps $Z$ is then encoded by the~$n\times (n+1)$ integral matrix
$$
P=\begin{bmatrix}
  v_0 & \dots & v_n  
\end{bmatrix}.
$$
The divisor class group of $Z$ is isomorphic to~$\Z^{n+1}/\im(P^*)$, where $P^*$ denotes the transpose of $P$. We present $\Cl(Z)$ in \emph{invariant factor form}:
$$\Cl(Z)=\Z\oplus \Z/\mu_1\Z\oplus\dots\oplus\Z/\mu_r\Z,$$
that means $\mu_r \mid \mu_{r-1} \mid \dots \mid \mu_1$.
Consider the projection $Q\colon \Z^{n+1}\rightarrow\Cl(Z)$, and let~$\omega_i=(w_i,\eta_i)\coloneqq Q(e_i)$, where $1\leq w_0\leq\dots\leq w_n$ and $\eta_i\in\Z/\mu_1\Z\oplus\dots\oplus\Z/\mu_r\Z$. We view $Q$ as a \emph{degree matrix in}~$\Cl(Z)$, that means
$$
Q=\begin{bmatrix}
    \omega_0 & \dots &\omega_n
\end{bmatrix}
$$
where any $n$ of the $\omega_i$ generate $\Cl(Z)$ as a group.
One can directly gain $Z$ as a quotient of $\K^{n+1}$ by the diagonal action of $H=\K^*\times F$ with weights $\omega_i$, where $F$ is finite and $H$ is the quasitorus with character group $\Cl(Z)$.

In \cite{Ghi}, we obtained the following explicit description of the Picard group of a fwps in terms of its degree matrix:

\begin{proposition} \label{prop:pic}
    Let $Z$ be a fwps with $\Cl(Z)=\Z\oplus \Z/\mu_1\Z\oplus\dots\oplus\Z/\mu_r\Z$ and degree matrix $Q=\begin{bmatrix}
        \omega_0 & \dots & \omega_n
    \end{bmatrix}$, where $\omega_i=(w_i,\eta_i)$.
    Let $0\leq\eta_{ij}'<\mu_j$ be the integer representing $\eta_{ij}$ and set
    \[
    \begin{array}{lcll}  
    L & \coloneqq & \lcm(w_0,\dots,w_n),
    \\[6pt]
    M_i & \coloneqq & \dfrac{\mu_i}{\gcd\left(\mu_i,\dfrac{L}{w_0}\eta_{0j}',\dots,\dfrac{L}{w_n}\eta_{nj}'\right)}, \quad 1 \le i \le r,
    \\[24pt]
    M & \coloneqq & \lcm(M_1,\dots,M_r).
    \end{array}
    \] 
    Then the Picard group of $Z$ is the subgroup of $\Cl(Z)$ generated by the element~$(LM,0)$.
\end{proposition}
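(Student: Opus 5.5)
The plan is to use the standard description of Cartier classes on a complete simplicial toric variety in terms of the degree matrix, and then to make the resulting intersection of cyclic subgroups explicit.

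\emph{Step 1: Pic as an intersection.} The maximal cones of the fan of $Z$ are $\sigma_i=\mathrm{cone}(v_j : j\neq i)$. A Weil class $D\in\Cl(Z)$ is Cartier if and only if, for every maximal cone $\sigma_i$, $D$ lies in the image under $Q$ of the coordinates \emph{not} belonging to $\sigma_i$. This is the usual criterion: $D$ is locally principal on the affine chart $U_{\sigma_i}$ exactly when a representative can be shifted by a principal divisor so that it is supported off $U_{\sigma_i}$. For $\sigma_i$ the only coordinate not in it is $e_i$, so I expect to obtain
\[
\Pic(Z)=\bigcap_{i=0}^n \Z\,\omega_i\subseteq \Cl(Z).
\]
Since $Z$ is complete, $\Pic(Z)$ is torsion free. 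It is nonzero, because $Z$ is projective. Hence $\Pic(Z)$ is infinite cyclic, generated by its unique element $(a,\eta)$ with smallest $a>0$.

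\emph{Step 2: solving the membership conditions.} Write $(a,\eta)=k_i\omega_i$ for each $i$. Comparing free parts gives $k_i=a/w_i$, so $w_i\mid a$ for all $i$, that is, $a=tL$ and $k_i=tL/w_i$. Comparing the $j$-th torsion coordinate gives
\[
\eta_j\equiv t\frac{L}{w_i}\eta'_{ij} \pmod{\mu_j}\quad\text{for all } i .
\]
Granting that the common value $\eta$ must be $0$, the condition becomes $\mu_j \mid t\frac{L}{w_i}\eta'_{ij}$ for all $i,j$. For fixed $j$ this says precisely that $M_j\mid t$, since $M_j$ is the additive order of $L$ acting on the gcd of the $\frac{L}{w_i}\eta'_{ij}$ modulo $\mu_j$. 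Hence the condition is $M\mid t$, and the minimal element is $(LM,0)$, as claimed. The converse inclusion, that $(LM,0)=\frac{LM}{w_i}\omega_i$ for every $i$, is immediate from the same computation.

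\emph{Main obstacle: showing $\eta=0$.} The delicate point is excluding a generator with nonzero torsion part $\eta=k_i\eta_i$ common to all $i$. My first attempt will use the hypothesis that any $n$ of the $\omega_i$ generate $\Cl(Z)$, together with the exact sequence $0\to\Z^n\xrightarrow{P^*}\Z^{n+1}\xrightarrow{Q}\Cl(Z)\to0$. The elements $k_ie_i-k_0e_i'$ (with $i'=0$) all lie in $\ker Q=\im P^*$. From this I will try to show that $\sum_i k_i e_i$, suitably weighted, is a combination whose $Q$-image has trivial torsion, forcing $(n+1)\eta$ and $\eta$ itself to vanish.

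If that route fails, I will fall back on the torsion-freeness of $\Pic(Z)$ directly. If $(tL,\eta)$ and $(tL,0)$ were both Cartier, their difference $(0,\eta)$ would be a torsion Cartier class and hence zero. So it suffices to show that $(tL\cdot s,0)$ is Cartier for some $s$, which reduces the problem to comparing the two cyclic subgroups and pinning down the minimal $t$ as $M$.
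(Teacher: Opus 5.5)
Your Step 1 ($\Pic(Z)=\bigcap_i \Z\omega_i$, which is infinite cyclic) is correct. So is the gcd computation in Step 2: once the torsion part is known to vanish, $\mu_j\mid t\frac{L}{w_i}\eta'_{ij}$ for all $i$ if and only if $M_j\mid t$.

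The gap is exactly the point you flag as the main obstacle, and neither of your two routes closes it. It is not a cosmetic point. A shear $(x,\tau)\mapsto(x,\tau+x\theta)$ of $\Cl(Z)=\Z\oplus T$ gives another degree matrix for the same $Z$, and it shifts the torsion part of the generator $(a,\eta)$ by $a\theta$. So $\eta=0$ has to be derived from the structure of $Q$.

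\textbf{Route 1.} It ends with ``forcing $(n+1)\eta$ and $\eta$ itself to vanish''. But $(n+1)\eta=0$ does not give $\eta=0$ when $\gcd(n+1,|T|)>1$. This happens already in the maximal codimension case, where $n+1=2s$ and $T=(\Z/2\Z)^r$.

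\textbf{Route 2.} It is circular. If $(tLs,0)$ is Cartier, then $(tLs,0)=s\,(tL,\eta)$, which only yields $s\eta=0$. Such an $s$ always exists: $s=\mu_1$ works, since $(L\mu_1,0)=\frac{L\mu_1}{w_i}\omega_i$ for every $i$. Concluding $\eta=0$ needs $s=1$, i.e.\ that $(tL,0)$ is Cartier, which is what you are trying to prove.

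\textbf{The missing idea.} Since $\omega_0,\dots,\omega_n$ generate $\Cl(Z)$, write $(1,0)=\sum_j c_j\omega_j$ with $c_j\in\Z$, so that $\sum_j c_jw_j=1$. From $(a,\eta)=k_j\omega_j$ and $a=k_jw_j$ you get $a\omega_j=w_j(k_j\omega_j)=w_j(a,\eta)$. Hence
\[
(a,0)=a\,(1,0)=\sum_j c_j\,a\,\omega_j=\Bigl(\sum_j c_jw_j\Bigr)(a,\eta)=(a,\eta),
\]
so $\eta=0$. This is the ``suitable weighting'' your Route 1 was looking for: the vector $\sum_j c_jk_jw_je_j$ has $Q$-image equal both to $(a,\eta)$ and to $a\,Q(\sum_j c_je_j)=(a,0)$.

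Applying the same identity to any $(0,\theta)\in T$ gives $a\theta=0$. So $a$ annihilates $T$, and the conclusion does not depend on the chosen splitting of $\Cl(Z)$. With this inserted, your Step 2 completes the proof.
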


In \cite{Bor}, Borisov introduced nef-partitions to construct families of Calabi-Yau complete intersections in toric varieties. We reformulate his definition in terms of degree matrices for the simplex case.

\begin{definition}\label{def:nef-part}
    Let $Z$ be a fwps with degree matrix $Q=[\omega_0,\dots,\omega_n]$. A \emph{nef-partition} of $Z$ is a partition $p_1\sqcup\dots\sqcup p_s=\{0,\dots,n\}$ such that
    $$\omega_{p_j}\coloneq\sum_{i\in p_j}\omega_i\in\Pic(Z), \hspace{5pt} j=1,\dots,s.$$
    The \emph{Calabi-Yau complete intersection (CYCI)} arising from such a partition is the family of complete intersections of general sections of $\omega_{p_1},\dots,\omega_{p_s}$ in $Z$.
\end{definition}

\begin{remark}\label{rem:linear-elimination} 
If $\omega_{p_j}=\omega_i$ for some indices $i$ and $j$, then a general section of~$\omega_{p_j}$ contains a monomial supported on the single
variable corresponding to $\omega_i$.
One can then eliminate this variable and the corresponding equation, obtaining an
isomorphic Calabi--Yau complete intersection in a fwps of dimension $d-1$ and codimension
$s-1$. Therefore, going forward we will restrict Definition \ref{def:nef-part} to nef-partitions for which
$\omega_{p_j}\neq \omega_i$ for all $j$ and all $i$, excluding in particular any partition having a
block of size one.
\end{remark}

\section{Classification of CYCI in fwps}

By Proposition \ref{prop:pic} and Definition \ref{def:nef-part} we have immediately the following characterization of nef-partitions in fwps:

\begin{proposition}\label{prop:nef-part}
    Let $Z$ be a fwps with $\Cl(Z)=\Z\oplus \Z/\mu_1\Z\oplus\dots\oplus\Z/\mu_r\Z$ and degree matrix $Q=\begin{bmatrix}
        \omega_0 & \dots & \omega_n
    \end{bmatrix}$. Then $p_1\sqcup\dots\sqcup p_s=\{0,\dots,n\}$ is a nef-partition if and only if, with the notation of Proposition \ref{prop:pic}, we have
    $$
    \begin{cases}
        L & | \ \ \sum_{i\in p_j} w_i,\quad j=1,\dots,s,\\[3pt]
        M_k & | \ \ \dfrac{\sum_{i\in p_j}w_i}{L},\quad j=1,\dots,s,\ k=1,\dots,r,\\[5pt]
        \sum_{i\in p_j}\eta_{ik} & =\ \ 0\in\Z/\mu_k\Z,\quad j=1,\dots,s,\ k=1,\dots,r.
    \end{cases}
    $$
\end{proposition}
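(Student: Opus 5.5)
Our plan is to unwind Definition~\ref{def:nef-part} with Proposition~\ref{prop:pic}. By the definition, $p_1\sqcup\dots\sqcup p_s$ is a nef-partition exactly when each block sum $\omega_{p_j}=(\sum_{i\in p_j}w_i,\ \sum_{i\in p_j}\eta_i)$ lies in $\Pic(Z)$. By Proposition~\ref{prop:pic}, $\Pic(Z)=\langle (LM,0)\rangle$. So the condition on each block $p_j$ is that there is an integer $t_j$ with
\[
\sum_{i\in p_j}w_i = t_j LM \quad\text{and}\quad \sum_{i\in p_j}\eta_i = t_j\cdot 0 = 0 .
\]
The two conditions refer to the free coordinate and the torsion coordinates separately, which is what we want.

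First we treat the torsion part. The element $(LM,0)$ has vanishing torsion component, so every multiple does too. Membership therefore forces $\sum_{i\in p_j}\eta_{ik}=0$ in $\Z/\mu_k\Z$ for every $k$. This is the third line of the system.

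Next we treat the free part. We must show that $LM\mid \sum_{i\in p_j}w_i$ is equivalent to the first two lines. Suppose $LM$ divides the sum $W_j:=\sum_{i\in p_j}w_i$. Then $L\mid W_j$. Moreover $M\mid W_j/L$, and since each $M_k$ divides $M=\lcm(M_1,\dots,M_r)$, we get $M_k\mid W_j/L$ for every $k$. Conversely, suppose $L\mid W_j$ and $M_k\mid W_j/L$ for all $k$. Then $W_j/L$ is a common multiple of the $M_k$, hence a multiple of their least common multiple $M$, so $LM\mid W_j$.

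Finally, given both conditions, we write $W_j=t_jLM$. Together with the vanishing torsion sum this gives $\omega_{p_j}=t_j(LM,0)\in\Pic(Z)$, which proves the converse direction. No step is a real obstacle. The only point needing care is that Proposition~\ref{prop:pic} describes $\Pic(Z)$ exactly as the cyclic subgroup generated by $(LM,0)$, with zero torsion part; this is what allows the torsion condition to be checked independently and row-wise.
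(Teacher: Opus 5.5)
Your proof is correct and follows the same route as the paper. The paper simply asserts that the proposition follows immediately from Proposition~\ref{prop:pic} and Definition~\ref{def:nef-part}. You have written out that step: membership in $\langle (LM,0)\rangle$ means the torsion sums vanish and $LM \mid \sum_{i\in p_j} w_i$, and the latter is equivalent to the first two lines because $M=\lcm(M_1,\dots,M_r)$.
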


As for \cite{Ghi}*{Proposition 4.1}, the conditions of Proposition \ref{prop:nef-part} can be checked independently on each torsion row. Accordingly, we adapt the method of \cite[Section 4]{Ghi} to our setting, generalizing the relevant definitions and procedures.

\begin{definition}\label{def: weight_vec}
    We call $w=[w_0,\dots,w_n]\in\Z^{n+1}_{\geq 1}$ an \emph{$s$-nef weight vector} if $w$ is the degree matrix of a fwps $Z$ with $\Cl(Z)=\Z$ and $Z$ admits a nef-partition $p_1\sqcup\dots\sqcup p_s=\{0,\dots,n\}$. We write $p(s,w)$ for the set of all such nef-partitions of $Z$, and call the pair $(w,p(s,w))$ an \emph{$s$-nef weight pair}.
\end{definition}

\begin{procedure}\label{proc:weights}
\emph{Input:} positive integers $n$ and $s$.\\ \emph{Procedure:}
\begin{enumerate}
  \item \label{weights:1} For $k=2,\dots,\left\lfloor\dfrac{n+1}{2}\right\rfloor$, compute the finite set $\mathcal{W}_k$ of all primitive non-decreasing $k$-tuples $(w_1,\dots,w_k)\in\Z_{\geq 1}^k$ satisfying $\lcm_i(w_i)\mid \sum_{i}w_i$, see for instance \cite{B}*{Algorithm 5.6};
  \item \label{weights:2} Compute the set $\mathcal{D}$ of all non-increasing $s$-tuples $(d_1,\dots,d_s)\in\Z_{\geq 2}^s$ satisfying $n+1=\sum_id_i$;
  \item \label{weights:3} For every $(d_1,\dots,d_s)\in\mathcal{D}$, form the set $\mathcal{W}$ of all $s$-tuples $(W_1,\dots,W_s)$, where $W_i\in\mathcal{W}_{d_i}$ and for every $i$ either $d_i>d_{i+1}$ or $W_i\geq_{lex}W_{i+1}$;
  \item \label{weights:4} For every $(W_1,\dots,W_s)\in\mathcal{W}$, denote with $L_i$, $S_i$ and $m_i$ respectively the least common multiple, the sum and the maximum of the coordinates of $W_i$. Then compute the set $\mathcal{C}(W_1,\dots,W_s)$ of all primitive $s$-tuples~$(c_1,\dots,c_s)\in\Z_{\geq1}^s$ such that:
  \vspace{3pt}
  \begin{enumerate}
      \item $c_1\mid \lcm(S_2,\dots,S_s)$;
      \vspace{3pt}
      \item $c_i L_i\mid c_jS_j$ for all $i,j=1,\dots,s$;
      \item $c_iS_i>c_jm_j$ for all $i,j=1,\dots,s$.
  \end{enumerate}
  \vspace{3pt}
  \item\label{weights:5} For every $(W_1,\dots,W_s)\in\mathcal{W}$ and $(c_1,\dots,c_s)\in\mathcal{C}(W_1,\dots,W_s)$, let $w$ be the concatenation of~$c_1W_1,\dots,c_sW_s$, reordered non-decreasingly. Then store~$w$ together with the partition $p_1\sqcup\dots\sqcup p_s=\{0,\dots,n\}$, where $p_i$ is the subset of indices corresponding to the elements of $c_iW_i$;
  \item \label{weights:6} Collect all partitions sharing the same $w$ in a set $q(s,w)$, then return all pairs $(w,q(s,w))$.
\end{enumerate}
\emph{Output}: the set of all $s$-nef weight pairs $(w,p(s,w))$ with $w\in\Z^{n+1}_{\geq 1}$.
\end{procedure}

\begin{proof}
Let $w=(w_0,\dots,w_n)\in\Z_{\ge 1}^{n+1}$ be an $s$-nef weight vector. By definition $w$ is primitive, and there exists a nef-partition
$p_1\sqcup\cdots\sqcup p_s=\{0,\dots,n\}$.
Reorder the parts so that $|p_1|\ge\cdots\ge |p_s|$, and set $d_i:=|p_i|$. By Remark \ref{rem:linear-elimination} we have $d_i\geq 2$ for all $i$, and hence $d_1\leq\frac{n+1}{2}$.
For each $i$, let $\widetilde W_i$ be the nondecreasing list of the weights $\{w_j : j\in p_i\}$ and define
\[
c_i:=\gcd(\widetilde W_i),\qquad W_i:=\widetilde W_i/c_i.
\]
Then $W_i$ is nondecreasing and primitive. Denote by
\[
L:=\lcm(w_0,\dots,w_n),\qquad L_i:=\lcm(W_i),\qquad S_i:=\sum_{v\in W_i} v.
\]
Since $\lcm(c_iW_i)=c_i\lcm(W_i)=c_iL_i$, we have $c_iL_i \mid L$ for all $i$.
Moreover, Proposition~\ref{prop:nef-part} gives
\[
L \ \mid \ \sum_{j\in p_i} w_j \ =\ c_i S_i \qquad \text{for each } i.
\]
It follows that $c_iL_i \mid c_iS_i$, hence $L_i \mid S_i$. Therefore $W_i\in\mathcal{W}_{d_i}$ for every $i$.
The tuple $(d_1,\dots,d_s)$ is an element of $\mathcal{D}$, and after possibly reordering blocks of equal length we may assume the lexicographical ordering of Step~\ref{weights:3}. Next, for all $i,j$ we have $c_iL_i \mid L$ and $L\mid c_jS_j$, hence condition (b) of Step~\ref{weights:4}. Condition (c) follows directly by Remark     \ref{rem:linear-elimination}.
Furthermore, since $w$ and $W_i$ are primitive, $(c_1,\dots,c_s)$ is also primitive.
Finally, let $p^m$ be a prime power dividing~$c_1$.
By primitivity of $(c_1,\dots,c_s)$ there exists $j\ge 2$ such that $p^m\nmid c_j$.
Since~$c_1L_1 \mid c_jS_j$, we have $p^m \mid c_jS_j$, hence~$p^m \mid S_j$.
Thus $c_1 \mid \lcm(S_2,\dots,S_s)$, and we conclude that $(c_1,\dots,c_s)\in\mathcal{C}(W_1,\dots,W_s)$.
Consequently, the procedure enumerates the data $(d_1,\dots,d_s)$, $(W_1,\dots,W_s)$ and $(c_1,\dots,c_s)$, and Step \ref{weights:5} reconstructs $w$ and its nef-partitions.

Conversely, consider any output pair $(w,q(s,w))$ coming from blocks $c_iW_i$. Then
\[
L \ =\ \lcm\bigl(\,\lcm(c_1W_1),\dots,\lcm(c_sW_s)\,\bigr)
\ =\ \lcm(c_1L_1,\dots,c_sL_s).
\]
By condition (b) of Step~\ref{weights:4} we have $c_iL_i \mid c_jS_j$ for all $i,j$, hence $L \mid c_jS_j$ for each $j$. Thus each partition in $q(s,w)$ is a nef-partition and $w$ is an $s$-nef weight vector.
\end{proof}

\begin{definition}\label{def: torsion_vec}
    Let $w$ be an $s$-nef weight vector and $\mu\in\Z_{\geq 2}$. We call~$\eta=[\eta_0,\dots,\eta_n]\in(\Z/\mu\Z)^{n+1}$ an \emph{$s$-nef torsion vector of order $\mu$ for $w$ }if the matrix
    $$Q=\begin{bmatrix}
        w_0&\dots & w_n\\
        \eta_0 & \dots & \eta_n
    \end{bmatrix}$$
    is the degree matrix of a fwps $Z=Z(Q)$ with $\Cl(Z)=\Z\oplus\Z/\mu\Z$ and $Z$ admits a nef-partition $p_1\sqcup\dots\sqcup p_s=\{0,\dots,n\}$. We write $p(s,\eta)\subseteq p(s,w)$ for the set of all such nef-partitions. and we call the pair $(\eta,p(s,\eta))$ an \emph{$s$-nef torsion pair of order~$\mu$ for $w$}.
    \end{definition}
    
    \begin{definition}
    Let $w$ be an $s$-nef weight vector and $\mu\in\Z_{\geq 2}$. We say that two $s$-nef torsion vectors $\eta$ and $\zeta$ of order $\mu$ for $w$ are \emph{equivalent} if there exists an automorphism of~$\Z\oplus\Z/\mu\Z$ sending $(w_i,\eta_i)$ to $(w_i,\zeta_i)$ for $i=0,\dots,n$. We write~$\eta\leq\zeta$ if $[\eta'_0,\dots,\eta'_n]\leq_{lex}[\zeta_0',\dots,\zeta_n']$, where $\eta_i'$ and $\zeta_i'$ are the representative integers for $\eta_i$ and $\zeta_i$ between $0$ and $\mu-1$. We say that $\eta$ is \emph{minimal} if $\eta\leq\zeta$ for all $\zeta$ equivalent to $\eta$.
\end{definition}

The following characterization of fwps admitting a nef-partition follows from Proposition \ref{prop:nef-part} and \cite{Ghi}*{Proposition 4.5}.

\begin{proposition}\label{prop:gor_matrix}
    Let $Z$ be a fwps with $\Cl(Z)=\Z\oplus\Z/\mu_1\Z\oplus\dots\oplus\Z/\mu_r\Z$. Then $Z$ admits a nef-partition $p_1\sqcup\dots\sqcup p_s=\{0,\dots,n\}$ if and only if $Z=Z(Q)$ for a degree matrix $$Q=\begin{bmatrix}
        w_0 & \dots & w_n\\
        \eta_{01} & \dots & \eta_{n1} \\
        \vdots & & \vdots \\
        \eta_{0r} & \dots & \eta_{nr} \\
    \end{bmatrix},$$
    where \begin{itemize}
        \item $w=[w_0,\dots,w_n]$ is an $s$-nef weight vector;
        \item $\eta_{\ast i}\coloneqq[\eta_{0i},\dots,\eta_{ni}]$ is a minimal $s$-torsion vector of order $\mu_i$ for $w$;
        \item $\eta_{\ast i}< \eta_{\ast j}$ for all $i<j$ such that $\mu_i=\mu_j$;
        \item the nef-partition lies in the intersection $\bigcap_{j=1}^rp(s,\eta_{\ast j})$.
    \end{itemize}.
\end{proposition}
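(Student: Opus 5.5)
The plan is to run the argument of \cite{Ghi}*{Proposition 4.5} row by row, using Proposition~\ref{prop:nef-part} to control the nef condition. The key point is that the three conditions in Proposition~\ref{prop:nef-part} split into a condition on the free row alone (the first one), and conditions involving a single torsion row $k$ together with the free row (the second and third, which depend only on $M_k$ and on $\eta_{\ast k}$).

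For the direction ``$\Rightarrow$'', start with a fwps $Z$ admitting a nef-partition and a degree matrix $Q$ in invariant factor form, and proceed as follows.
\begin{enumerate}
\item Check the free row. The first row $w$ is primitive, since any $n$ columns generate $\Cl(Z)$. The matrix $[w]$ is the degree matrix of the fwps $Z'$ with $\Cl(Z')=\Z$, which exists because we have just projected away the torsion. The first condition of Proposition~\ref{prop:nef-part} is exactly the nef condition for $Z'$, so $w$ is an $s$-nef weight vector and the partition lies in $p(s,w)$.
\item Check each torsion row. For each $k$, the matrix $\begin{bmatrix} w\\ \eta_{\ast k}\end{bmatrix}$ has columns $(w_i,\eta_{ik})$ in $\Z\oplus\Z/\mu_k\Z$. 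These are the images of the $\omega_i$ under the projection of $\Cl(Z)$ onto $\Z\oplus\Z/\mu_k\Z$, so any $n$ of them generate. Hence it is the degree matrix of a fwps $Z_k$.
\item Compare the Picard data. By Proposition~\ref{prop:pic}, the quantity $M$ for $Z_k$ is exactly $M_k$ for $Z$, and $L$ is the same. Therefore the first condition together with the $k$-th instances of the second and third conditions of Proposition~\ref{prop:nef-part} for $Z$ are precisely the nef conditions for $Z_k$. This places the partition in $p(s,\eta_{\ast k})$, hence in the intersection over all $k$.
\item Normalize. Replace $Q$ within its isomorphism class using automorphisms of $\Cl(Z)$. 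This makes each row minimal and orders equal-order rows strictly increasingly, exactly as in \cite{Ghi}*{Proposition 4.5}. These automorphisms preserve the Picard group and the nef conditions, so the partition survives. Strictness is argued as in \cite{Ghi}: two equal rows of the same order would contradict the generation property.
\end{enumerate}

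For the direction ``$\Leftarrow$'', take a matrix $Q$ with the listed properties. By \cite{Ghi}*{Proposition 4.5}, $Q$ is the degree matrix of a fwps with the stated class group. The nef conditions of Proposition~\ref{prop:nef-part} then hold row by row: the first from $w$, the second and third from membership in each $p(s,\eta_{\ast j})$, again via the identification of $M_k$ with the $M$ of $Z_k$.

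The main obstacle is the normalization step. An automorphism of $\Cl(Z)$ chosen to minimize one torsion row can mix in other torsion coordinates, so it is not obvious that the rows can be normalized one at a time. Here I would invoke \cite{Ghi}*{Proposition 4.5} as a black box. The only thing to add is that nef-ness is an intrinsic property of $Z$ and of the classes $\omega_{p_j}$, since $\omega_{p_j}\in\Pic(Z)$ is invariant under isomorphism. Consequently the set of nef-partitions does not depend on the chosen presentation, and the reduction of \cite{Ghi} carries over verbatim.
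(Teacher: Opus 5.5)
Your proposal is correct and follows the paper's proof. You split the conditions of Proposition~\ref{prop:nef-part} row by row, using that $L$ and $M_k$ are the same for $Z$ and for the fwps defined by $w$ and $\eta_{\ast k}$, so each row is an $s$-nef weight or torsion vector containing the partition; you then normalize with automorphisms of $\Cl(Z)$ from \cite{Ghi} (the paper cites Theorem~2.1 there), and the converse is immediate from Proposition~\ref{prop:nef-part}. The obstacle you flag is not really one: any automorphism of $\Z\oplus\Z/\mu_k\Z$ fixing free coordinates has the form $(a,b)\mapsto(a,\lambda a+ub)$ with $u$ a unit, and it extends to $\Cl(Z)$ acting only on the $k$-th torsion coordinate, so the rows can be minimized independently and then sorted.
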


\begin{proof}
    By Proposition \ref{prop:nef-part}, if $Z=Z(Q)$ for such a degree matrix then it admits a nef-partition $p_1\sqcup\dots\sqcup p_s=\{0,\dots,n\}$.
    Now assume $Z$ admits such a nef-partition, and let $Q$ be any degree matrix for $Z$. By Proposition \ref{prop:pic}, the first row $w$ of $Q$ is an $s$-nef weight vector, and the $(i+1)$-th row of $Q$ is an $s$-nef torsion vector of order $\mu_i$ for $w$, for $i=1,\dots,r$. By \cite[Theorem 2.1]{Ghi}, there exists an automorphism of $\Cl(Z)$ that turns all torsion vectors minimal and orders increasingly those with the same torsion order.
\end{proof}

\begin{algorithm}[CYCI in fwps]\label{algo:classification}
\emph{Input:} positive integers~$d$ and $s$. \\
\emph{Algorithm:}
\begin{enumerate}
  \item\label{step 1} Compute all $s$-nef weight pairs $(w,p(s,w))$ with $w\in\Z^{d+s+1}$ using Procedure \ref{proc:weights};
  \item\label{step 2} For each $s$-nef weight vector $w$ compute all pairs $(\mu,\eta)$ such that $\mu\in\Z_{\ge 2}$ and $\eta$ is a minimal $s$-nef torsion vector of order $\mu$ for $w$, using \cite{Ghi}*{Procedure 4.10} and filtering out those with $p(s,\eta)=\varnothing$;
  \item\label{step 3} Compute all degree matrices obtained by suitably combining an $s$-nef weight vector $w$ with its minimal $s$-nef torsion vectors from~(\ref{step 2}), using \cite{Ghi}*{Procedure 4.13} and filtering out at every step those with $\bigcap_{j}p(s,\eta_{\ast j})=\varnothing$;
  \item\label{step 4} Select a subset of the degree matrices from~(\ref{step 3}) that contains exactly one degree matrix for each isomorphism class of fwps, using \cite{Ghi}*{Procedure 4.16}. For each representative degree matrix $Q$, store it together with the set of its nef-partitions $\bigcap_{j=1}^r p(s,\eta_{\ast j})$;
  \item\label{step 5} For each stored degree matrix $Q$ and each nef-partition compute the multidegree of the associated complete intersection by summing the columns of $Q$ over each block, and identify nef-partitions yielding the same multidegree.
\end{enumerate}
\emph{Output:} the complete list of $d$-dimensional CYCI of codimension $s$ arising from nef-partitions in fwps.
\end{algorithm}

\begin{proof}
    Each step of the algorithm terminates by Procedure \ref{proc:weights} and the relevant Procedures from \cite{Ghi}*{Sec. 4} respectively. The output is correct and complete by Proposition \ref{prop:gor_matrix}.    
\end{proof}

\section{CYCI of maximal codimension}\label{sec:maximal codimension}

Complete intersections of maximal codimension exhibit many symmetries, which significantly slow down the isomorphism filtering in Step \ref{step 4} of Algorithm \ref{algo:classification}.  However, we can use these symmetries to provide an explicit characterization that renders their classification more tractable.

\begin{proposition}\label{prop:maximal codimension}
    Let $Z$ be a fwps of dimension $n$ with $\Cl(Z)=\Z\oplus \Z/\mu_1\Z\oplus\dots\oplus\Z/\mu_r\Z$ admitting a nef-partition $p_1\sqcup\dots\sqcup p_s=\{0,\dots,n\}$. Then $2s\leq n+1$. Furthermore, a fwps $Z$ of dimension $2s-1$ admits such a nef partition if and only if $\mu_1=\dots=\mu_r=2$ and $Z$ admits a degree matrix of the form
    $$
    Q = \begin{bmatrix}
    \mathbbm{1}_{s} & \mathbbm{1}_s \\
    A & A
    \end{bmatrix},\quad \mathbbm{1}_s=[1,\dots,1],
    $$
    where $A$ is an $r\times s$ matrix in $\Z/2\Z$ whose columns affinely span the whole $(\Z/2\Z)^r$.
\end{proposition}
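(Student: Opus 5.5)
The bound $2s\le n+1$ follows from Remark~\ref{rem:linear-elimination}: every block $p_j$ has at least two elements, so $n+1=\sum_j|p_j|\ge 2s$. If $n=2s-1$, equality forces every block to have size exactly two, say $p_j=\{a_j,b_j\}$. The plan is to study the conditions of Proposition~\ref{prop:nef-part} on each such pair, first on the free row and then on the torsion rows.

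\emph{Free part.} For each block, $L\mid w_{a_j}+w_{b_j}$. Since $w_{a_j},w_{b_j}\le L$, the sum is at most $2L$, so it equals $L$ or $2L$. If it equals $2L$, then $w_{a_j}=w_{b_j}=L$. If it equals $L$ with, say, $w_{a_j}<w_{b_j}$, then $w_{a_j}=L-w_{b_j}$ divides $L$, hence divides $w_{b_j}$, so $w_{b_j}=(k-1)w_{a_j}$ with $L=kw_{a_j}$. Moreover $w_{b_j}\mid L$ forces $k-1\mid k$, so $k=2$ and $w_{a_j}=w_{b_j}=L/2$. In every case the two weights in a block are equal, and each weight is $L$ or $L/2$.

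Now use that $w$ is primitive and that any $n$ of the columns generate $\Cl(Z)$; in particular their free parts generate $\Z$, so every $n$-subset of weights has gcd $1$. Since $n=2s-1\ge1$ (take $s\ge1$), the weights are $L/2$ or $L$, and removing one weight still leaves one from every block with $s\ge 2$, or a whole class. This forces all weights to equal $1$. (If some weights are $L$ and some $L/2$, the gcd of all is $L/2$, so $L=2$; but then removing a weight-$1$ column leaves its block partner, and one checks that the remaining weights still have gcd $1$ only if $1$-weights remain. This small case analysis needs care, and the Picard condition below should rule it out.) So I expect to show $w=[\mathbbm 1_s,\mathbbm 1_s]$ after reordering columns so that block $j$ consists of columns $j$ and $j+s$. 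Then $L=1$ and the block sum is $2$, so the Picard condition becomes $M_k\mid 2$ for all $k$.

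\emph{Torsion part.} With $L=1$, we have $M_k=\mu_k/\gcd(\mu_k,\eta_{0k}',\dots,\eta_{nk}')$. Any $n$ columns generate the torsion, so the gcd in row $k$ is $1$ and $M_k=\mu_k$; hence $\mu_k\mid 2$, i.e.\ $\mu_k=2$. The third condition of Proposition~\ref{prop:nef-part} then reads $\eta_{a_j}+\eta_{b_j}=0$ in $(\Z/2\Z)^r$, so the two torsion columns in a block coincide. This gives $Q=\begin{bmatrix}\mathbbm 1_s&\mathbbm 1_s\\ A&A\end{bmatrix}$.

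\emph{Generation condition.} Since every column appears twice, deleting any one column leaves all distinct columns present. So the requirement that any $n$ columns generate $\Z\oplus(\Z/2\Z)^r$ is equivalent to the full set $\{(1,a_i)\}$ generating it. This holds iff the differences $a_i-a_1$ span $(\Z/2\Z)^r$, i.e.\ iff the $a_i$ affinely span. For the converse, such a $Q$ is a valid degree matrix, and the pairing is a nef-partition by Proposition~\ref{prop:nef-part}, since $L=1$, $M_k\mid 2$, and the torsion sums vanish.

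The main obstacle is the free-part step: showing the weights in a pair are equal and that all weights are $1$, in particular excluding mixtures of $L$ and $L/2$ using primitivity and generation.
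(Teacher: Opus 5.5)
There is a genuine gap, and it is exactly at the step you flagged: excluding the mixed case $L=2$, where some blocks have weights $(1,1)$ and others $(2,2)$. Primitivity, the generation condition and the Picard condition cannot rule this case out, so your expectation that ``the Picard condition below should rule it out'' cannot be met. A concrete example is $Z=\mathbb{P}(1,1,2,2)$ with blocks $\{0,1\}\sqcup\{2,3\}$. It is a fwps, since any three of the weights have gcd $1$. Here $L=2$ and $M=1$, and the block sums $2$ and $4$ both lie in $\Pic(Z)=2\Z$. So every condition of Proposition~\ref{prop:nef-part} holds with $n=3=2s-1$, yet the weights are not all $1$. Without further input the classification you are proving is false.

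The missing ingredient is the full restriction from Remark~\ref{rem:linear-elimination}: $\omega_{p_j}\neq\omega_i$ for all $i,j$. You only used the weaker consequence that blocks have size at least two. The paper excludes $L=2$ precisely by observing that the sum of a $(1,1)$-block coincides with another weight.

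Your Picard computation is the right tool to make that observation rigorous, including the torsion:
\begin{itemize}
\item When $L=2$, a $(1,1)$-block has $\sum_{i\in p_j} w_i/L=1$, so $M_k\mid 1$, i.e.\ $M_k=1$ for all $k$.
\item For a weight-$2$ column we have $L/w_i=1$, so $M_k=1$ forces $\mu_k\mid\eta'_{ik}$. Hence every weight-$2$ column is $(2,0)$.
\item Since $\Pic(Z)=\langle(2,0)\rangle$, the $(1,1)$-block sum must also be $(2,0)$.
\end{itemize}
Thus $\omega_{p_j}=\omega_i$ for a weight-$2$ column $i$, which the Remark excludes. In the example above, the complete intersection is really a quartic in $\mathbb{P}(1,1,2)$.

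With this step replaced, the rest of your proposal is correct and follows the paper's route: the pairing argument, $M_k=\mu_k$ from generation giving $\mu_k=2$, equal torsion within each block, and the generation criterion for the converse. It is in places more careful than the paper's proof. In particular, you correctly derive affine rather than linear spanning of the columns of $A$.
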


\begin{proof}
    First, notice that, if $2s>n+1$, at least one of the blocks of the nef-partition would have size one, contradicting Remark \ref{rem:linear-elimination}.
    
    Now assume that $n=2s-1$. Then each block of the nef-partition has size two.
    Let $\omega_i=(w_i,\eta_i)$ and $\omega_j=(w_j,\eta_j)$ be two weights forming a block.
    By Proposition \ref{prop:nef-part}, we have $L \mid w_i + w_j$.
    By definition we have $w_i$, $w_j \le L$. Therefore, either~$w_i = w_j = L$ or $w_i+w_j=L$. In the latter case we have $w_i \mid L-w_i=w_j$ and viceversa, implying $w_i=w_j=\frac{L}{2}$.
    If all pairs $(w_i,w_j)$ were of the type $\left(\frac{L}{2},\frac{L}{2}\right)$, the lowest common multiple of the weights would also be $\frac{L}{2}$, contradicting the definition of $L$. Hence, at least one pair is of the type $(L,L)$. Also, since $\gcd(w_0,\dots,w_s)=1$, we have that either $L=1$ or $L=2$ and at least a pair has the form $(1,1)$. In the latter case, the sum of the pair would coincide with one of the other $w_i$, contradicting Remark \ref{rem:linear-elimination}. Hence, we conclude that $w_i=1$ for all $i$.

    By Proposition \ref{prop:nef-part}, it follows that $\mu_i=2$ for all $i$, and each block of the nef-partition is of the form $(\omega_i,\omega_i)$ with $\omega_i=(1,\eta_i)$. After appropriately permuting the columns of the degree matrix, we can therefore reach the form
    $$
    Q = \begin{bmatrix}
    \mathbbm{1}_{s} & \mathbbm{1}_s \\
    A & A
    \end{bmatrix}.
    $$
    We conclude by noticing that the almost freeness of $Q$ is equivalent to the condition that the columns of $A$ span the whole $(\Z/2\Z)^r$.
\end{proof}

\begin{proposition}\label{prop:multisets}
    Let $P(d)$ be the set of all multisets of $d+1$ points (counted with multiplicity) in $(\Z/2\Z)^r$ that affinely span the whole space.
    Then, the $d$-dimensional $CYCI$ arising from a nef partition of codimension $d+1$ in a fwps are in bijection with the orbits of $P(d)$ under the action of the group $\operatorname{AGL}(r, \Z/2\Z)$, described via the affine transformation:
    \[
    \alpha: \operatorname{AGL}(r, \Z/2\Z) \times (\Z/2\Z)^r \to (\Z/2\Z)^r, \quad \big((v, M), x\big) \mapsto Mx + v.
    \]
\end{proposition}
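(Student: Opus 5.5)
The plan is to combine Proposition~\ref{prop:maximal codimension} with the description of isomorphisms of fwps via degree matrices, as used in Step~\ref{step 4} of Algorithm~\ref{algo:classification}. A $d$-dimensional CYCI of codimension $s=d+1$ lives in a fwps of dimension $n=d+s=2s-1$. This is exactly the boundary case $2s=n+1$ of Proposition~\ref{prop:maximal codimension}. Hence $\Cl(Z)=\Z\oplus(\Z/2\Z)^r$ and $Z$ has a degree matrix
\[
Q=\begin{bmatrix}\mathbbm{1}_s & \mathbbm{1}_s\\ A & A\end{bmatrix},
\]
where the columns $a_1,\dots,a_s$ of $A$ affinely span $(\Z/2\Z)^r$. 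To such a CYCI I would assign the multiset $\{a_1,\dots,a_s\}$, which lies in $P(d)$ since $s=d+1$. Conversely, every multiset in $P(d)$ produces, via the same matrix, a degree matrix of a fwps (almost freeness again by Proposition~\ref{prop:maximal codimension}). Together with the pairing of equal columns it gives a nef-partition, because $2(1,a_i)=(2,0)$ and Proposition~\ref{prop:nef-part} applies with $L=1$, $M=2$. So we get maps in both directions, and it remains to check that they are well defined on isomorphism classes and orbits.

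\textbf{Step 1: the nef-partition and multidegree are determined by $Z$.} The proof of Proposition~\ref{prop:maximal codimension} shows that every block of any nef-partition consists of two equal columns $(1,\eta),(1,\eta)$. Hence all nef-partitions of $Z$ differ only in how coinciding columns are paired. They all produce the same multidegree $((1,a_1)+(1,a_1),\dots)$, up to order, so Step~\ref{step 5} identifies them. Therefore the CYCI is determined by the isomorphism class of $Z$ alone, and the multiset of columns of $A$ is determined by $Q$ up to reordering.

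\textbf{Step 2: the automorphism group is $\operatorname{AGL}(r,\Z/2\Z)$.} Two degree matrices define isomorphic fwps if and only if they differ by a column permutation and an automorphism of $\Cl(Z)$ (this is the equivalence used in \cite{Ghi}). I would compute $\operatorname{Aut}(\Z\oplus(\Z/2\Z)^r)$ explicitly. Any automorphism has the form $(a,x)\mapsto(\varepsilon a,\ Mx+a v)$ with $\varepsilon=\pm1$, $M\in\mathrm{GL}(r,\Z/2\Z)$ and $v\in(\Z/2\Z)^r$, since $\operatorname{Hom}((\Z/2\Z)^r,\Z)=0$. Requiring the free weights to stay positive forces $\varepsilon=1$. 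On a column $(1,\eta)$ the automorphism then acts by $\eta\mapsto M\eta+v$, which is exactly the action $\alpha$. Column permutations preserving the block form just permute the multiset. Hence isomorphism classes of such $Z$ correspond to $\operatorname{AGL}(r,\Z/2\Z)$-orbits of multisets in $P(d)$. This holds for each fixed $r$, and $r$ is itself an isomorphism invariant since it is read off from $\Cl(Z)$.

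\textbf{Main obstacle.} The delicate point is the reduction to matrices in the normal form: an isomorphism between two fwps in block form might a priori use a column permutation that breaks the visible pairing $[\,\mathbbm{1}\ \mathbbm{1};A\ A\,]$. I would handle this by observing that any permutation sends the multiset of all $2s$ columns to itself. That multiset is the doubled multiset of columns of $A$, so the underlying multiset of $A$ is recovered canonically. A second point to verify is that "affinely span" is the correct almost-freeness condition: removing one column $(1,a_i)$ leaves its twin, so the remaining columns generate $\Z\oplus(\Z/2\Z)^r$ if and only if the differences $a_i-a_j$ span $(\Z/2\Z)^r$. I would take this directly from Proposition~\ref{prop:maximal codimension}.
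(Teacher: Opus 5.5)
Your proposal is correct and follows essentially the same route as the paper: reduce to the block normal form of Proposition~\ref{prop:maximal codimension}, then identify the isomorphisms of degree matrices from \cite{Ghi}*{Theorem 2.1} (column permutations and automorphisms of $\Z\oplus(\Z/2\Z)^r$) with the $\operatorname{AGL}(r,\Z/2\Z)$-action on the multiset of columns of $A$. Your explicit computation of $\operatorname{Aut}(\Z\oplus(\Z/2\Z)^r)$, your check that all nef-partitions give the same multidegree, your recovery of the multiset from the doubled columns, and your use of affine (not linear) span as the almost-freeness condition only spell out details that the paper's short proof leaves implicit.
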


\begin{proof}
    Let $Z$ be the $(2d+1)$-dimensional ambient fwps for such a CYCI.
    By Proposition \ref{prop:maximal codimension}, $Z$ admits a degree matrix
    $$
    Q = \begin{bmatrix}
    \mathbbm{1}_{s} & \mathbbm{1}_s \\
    A & A
    \end{bmatrix},
    $$
    where $A$ is an $r\times s$ matrix in $\Z/2\Z$ whose column span is the whole $(\Z/2\Z)^r$. The columns of $A$ then form an element $P_A$ of $P(d)$, and viceversa every element of~$P(d)$ yield such a matrix $A$ by listing the points of the multiset as columns (with multiplicity).
    
    By \cite{Ghi}*{Theorem 2.1}, the degree matrix $Q$ yields isomorphic toric varieties up to performing elementary row operations on $A$, permuting its columns and adding $1$ to all entries of a row. In the correspondence with $P_A$, this is equivalent respectively to applying elements of $\text{GL}(r,\Z/2\Z)$ to the points of $P_A$, ignoring their order (automatic, since $P_A$ is a multiset) and traslating them by a vector $w\in(\Z/2\Z)^r$.
\end{proof}

\section*{Acknowledgments}
The author would like to thank Professor Victor Batyrev and Professor J\"urgen Hausen for their helpful suggestions and support during the preparation of this paper.

\end{document}